\newtheorem{thm}{Theorem}
\theoremstyle{definition}
\xpatchcmd{\proof}{\itshape}{\normalfont\proofnameformat}{}{}
\newcommand{\proofnameformat}{}
\begin{document}

\renewcommand{\proofnameformat}{\bfseries}

\begin{center}
{\Large\textbf{Remarks on sums of reciprocals of fractional parts}}

\vspace{10mm}

\textbf{Bence Borda}

{\footnotesize Graz University of Technology

Steyrergasse 30, 8010 Graz, Austria

Email: \texttt{borda@math.tugraz.at}}

\vspace{5mm}

{\footnotesize \textbf{Keywords:} Diophantine approximation, continued fraction,\\ small fractional parts, Diophantine sum, metric number theory}

{\footnotesize \textbf{Mathematics Subject Classification (2020):} 11J54, 11J71, 11J83}
\end{center}

\vspace{5mm}

\begin{abstract}
The Diophantine sums $\sum_{n=1}^N \| n \alpha \|^{-1}$ and $\sum_{n=1}^N n^{-1} \| n \alpha \|^{-1}$ appear in many different areas including the ergodic theory of circle rotations, lattice point counting and random walks, often in connection with Fourier analytic methods. Beresnevich, Haynes and Velani gave estimates for these and related sums in terms of the Diophantine approximation properties of $\alpha$ that are sharp up to a constant factor. In the present paper, we remove the constant factor gap between the upper and the lower estimates, and thus find the precise asymptotics for a wide class of irrationals. Our methods apply to sums with the fractional part instead of the distance from the nearest integer function, and to sums involving shifts $\| n \alpha + \beta \|$ as well. We also comment on a higher dimensional generalization of these sums.
\end{abstract}

\section{Introduction}

The subject of this paper is the asymptotic behavior of the Diophantine sums
\begin{equation}\label{diophantinesums}
\sum_{n=1}^N \frac{1}{\| n \alpha \|} \qquad \textrm{and} \qquad \sum_{n=1}^N \frac{1}{n \| n \alpha \|}
\end{equation}
for various irrational $\alpha$, where $\| \cdot \|$ denotes the distance from the nearest integer function. These sums appear in many different fields such as uniform distribution theory \cite{HO,KN}, multiplicative Diophantine approximation \cite{BHV,CT}, lattice point counting in polygons \cite{BE2,BO1,HL1,HL2,SK}, dynamical systems \cite{BO2,DF1,DF2,DS} and random walks \cite{BB1,BB2,BW,WE}. We refer to \cite{BEL,BHV} for a comprehensive survey.

The behavior of the more general sum $\sum_{n=1}^N n^{-p} \| n \alpha \|^{-q}$ is highly sensitive to the value of the exponents $p,q \ge 0$ \cite{KR}. Sharp estimates in the case $p=q=2$ were given in \cite{BE2}, and in the case $p=q>1$ in \cite{BO3}. The case $p>q$ leads to convergent series for certain irrationals \cite{CM}. For higher dimensional generalizations of the sums in \eqref{diophantinesums}, see \cite{BE1,FR1,FR2,LV}.

Hardy and Littlewood \cite{HL1,HL2,HL3}, Haber and Osgood \cite{HO}, Kruse \cite{KR} and more recently Beresnevich, Haynes and Velani \cite{BHV} gave estimates for the sums in \eqref{diophantinesums} in terms of the Diophantine approximation properties of $\alpha$ that are sharp up to a constant factor. The main goal of the present paper is to remove the constant factor gap between the upper and the lower estimates, thereby establishing the precise asymptotics.

Recall that an irrational $\alpha$ is called badly approximable if $\inf_{n \in \mathbb{N}} n \| n \alpha \| >0$. The best known estimates for such an $\alpha$ are\footnote{For the sake of readability, with a slight abuse of notation in all error terms $\log x := \log \max \{ e,x \}$.} \cite{BHV}
\[ N \log N \le \sum_{n=1}^N \frac{1}{\| n \alpha \|} \ll N \log N \qquad \textrm{and} \qquad \frac{1}{2} (\log N)^2 \le \sum_{n=1}^N \frac{1}{n \| n \alpha \|} \le 33 (\log N)^2 +O(\log N) . \]
Our first result improves these.
\begin{thm}\label{badlyapproximabletheorem} Let $\alpha$ be a badly approximable irrational. For any $N \ge 1$,
\[ \sum_{n=1}^N \frac{1}{\| n \alpha \|} = 2N \log N +O(N) \qquad \textrm{and} \qquad \sum_{n=1}^N \frac{1}{n \| n \alpha \|} = (\log N)^2 +O(\log N) \]
with implied constants depending only on $\alpha$.
\end{thm}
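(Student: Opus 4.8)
The plan is to establish the first asymptotic, for $S(N):=\sum_{n=1}^N\|n\alpha\|^{-1}$, by a sharp analysis of its counting function, and then to obtain the second sum by partial summation. Throughout I would fix the continued fraction $\alpha=[a_0;a_1,a_2,\dots]$ with convergents $p_k/q_k$ and write $\eta_k=\|q_k\alpha\|=|q_k\alpha-p_k|$, so that $\eta_{k-1}=a_{k+1}\eta_k+\eta_{k+1}$ and $\tfrac{1}{q_{k+1}+q_k}<\eta_k<\tfrac{1}{q_{k+1}}$. Badly approximability ($a_k\le M$ for all $k$) enters in two ways: the denominators grow geometrically, so that $q_K\le N<q_{K+1}$ forces $K\asymp\log N$, while the ratios $\eta_{k-1}/\eta_k\le a_{k+1}+1\le M+1$ and $q_{k+1}/q_k\le M+1$ stay bounded.

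The main step is a control of the counting function $A(\epsilon):=\#\{1\le n\le N:\|n\alpha\|<\epsilon\}$. The layer-cake formula gives
\[ S(N)=\int_{1/N}^{1/2}\frac{1}{\epsilon}\,dA(\epsilon)+O(N), \]
the boundary contributions below $1/N$ being $O(N)$ because $A(1/N)=O(1)$ and each such $n$ contributes at most $1/\eta_K\le(M+2)N$. Equidistribution of $(n\alpha)$ suggests $A(\epsilon)\approx 2\epsilon N$, the factor $2$ being the Lebesgue measure per unit length of the $\epsilon$-neighbourhood of $0$. If this held with a negligible error the integral would telescope: summing $\int_{\eta_k}^{\eta_{k-1}}\epsilon^{-1}\,d(2\epsilon N)=2N\log(\eta_{k-1}/\eta_k)$ over $1\le k\le K$ collapses to $2N\log(\eta_0/\eta_K)=2N\log N+O(N)$, since $\eta_0=\|\alpha\|\asymp 1$ and $\eta_K\asymp 1/N$. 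This already produces both the leading order and the universal constant $2$.

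The difficulty — and what I expect to be the heart of the matter — is that the discrepancy bound $|A(\epsilon)-2\epsilon N|\ll ND_N\ll\log N$ is genuinely attained (for badly approximable $\alpha$ the star-discrepancy really is of order $\log N$), and it is fatally lossy here: inserted into $\int_{1/N}^{1/2}|A(\epsilon)-2\epsilon N|\,\epsilon^{-2}\,d\epsilon$ it only yields $O(N\log N)$, because the small scales carry the weight $\epsilon^{-2}$. There is no room to discard a logarithm by absolute values, so one is forced to evaluate $A(\epsilon)$ \emph{exactly} and exploit the cancellation between neighbouring scales. I would do this with the three-distance theorem: for $\eta_k\le\epsilon<\eta_{k-1}$ the points $\{n\alpha\}$ with $1\le n\le N$ in the $\epsilon$-neighbourhood of $0$ form a near-arithmetic progression governed by $q_k$, so that $A(\epsilon)$ is an explicit step function whose integral against $\epsilon^{-2}$ can be computed scale by scale. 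The resulting expression telescopes along the recursion $\eta_{k-1}=a_{k+1}\eta_k+\eta_{k+1}$, and the geometric growth of $q_k$ together with $a_{k+1}\le M$ compresses the sum to $O(N)$; this is exactly where the full strength of the bounded partial quotients is consumed, and it is the most delicate part of the argument.

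Finally, the second sum follows cleanly from $S(n)=2n\log n+O(n)$ by Abel summation. One has
\[ \sum_{n=1}^N\frac{1}{n\|n\alpha\|}=\frac{S(N)}{N}+\sum_{n=1}^{N-1}\frac{S(n)}{n(n+1)}=2\sum_{n=1}^{N-1}\frac{\log n}{n}+O(\log N)=(\log N)^2+O(\log N), \]
using $S(N)/N=O(\log N)$ and $\sum_{n\le N}\frac{\log n}{n}=\tfrac12(\log N)^2+O(1)$. Thus the entire theorem reduces to the sharp counting estimate for $A(\epsilon)$, in which the bounded partial quotients of $\alpha$ supply precisely the cancellation needed to pass from the classical $O(N\log N)$ error to the optimal $O(N)$.
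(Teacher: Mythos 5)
Your architecture coincides with the paper's: the layer-cake identity, the reduction to the counting function $A(\epsilon)=\#\{1\le n\le N:\|n\alpha\|\le\epsilon\}$, the observation that the uniform discrepancy bound $O(\log N)$ is fatally lossy against the weight $\epsilon^{-2}$, and the Abel summation deducing the second sum from the first are all exactly as in the paper. The gap is that the one step carrying the entire content of the theorem --- the sharp, scale-dependent evaluation of $A(\epsilon)$ --- is asserted rather than proved. What is actually needed is a bound of the form $|A(t)-2tN|\ll\log(tN)+1$ uniformly in $c/N\le t\le 1/2$; equivalently, an error on the $k$-th continued-fraction scale $[\eta_k,\eta_{k-1})$ of order $K-k+1$ rather than $\log N$, which the geometric growth of the $q_k$ then integrates to $\sum_{k\le K}(K-k+1)q_{k+1}\ll N$. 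The paper obtains this from the explicit local discrepancy formulas of Ro\c{c}adas--Schoi{\ss}engeier (alternatively T.~S\'os), which express $|\{n\le N:\{n\alpha\}\le t\}|-tN$ as $\sum_{k\le K}\{q_kt\}\left(a_{k+1}(1-\{q_kt\})+\cdots\right)+O(1)$; for badly approximable $\alpha$ the terms with $q_k\le 1/t$ contribute $\ll q_kt$ and the remaining $\ll\log(tN)$ terms contribute $O(1)$ each.

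Your proposed substitute --- the three-distance theorem --- does not by itself deliver this. Three-distance controls the multiset of gap lengths of $\{n\alpha\}$, $1\le n\le N$, all of order $1/N$ here, and hence only gives $A(\epsilon)\asymp\epsilon N$ up to a constant factor; to get the asymptotic $2\epsilon N$ with error $O(\log(\epsilon N)+1)$ you must know how many points of each gap type land in $[0,\epsilon]\cup[1-\epsilon,1]$, i.e.\ you need the Ostrowski-digit structure of the Bohr set $\{n\le N:\|n\alpha\|\le\epsilon\}$, which for $\epsilon\in[\eta_k,\eta_{k-1})$ and $N\gg q_{k+1}$ is a two-parameter generalized progression built from $q_k$ and $q_{k+1}$, not a single near-arithmetic progression governed by $q_k$. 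The telescoping you describe does ultimately go through, but establishing it \emph{is} the local-discrepancy computation; as written, the proposal replaces the proof of the theorem's only nontrivial estimate with the statement that it ``can be computed scale by scale.'' The boundary handling ($A(1/N)=O(1)$, each such term contributing $O(N)$) and the concluding Abel summation are correct as stated.
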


The only previously known precise asymptotic result (without a constant factor gap between the upper and the lower estimates) for the sums in \eqref{diophantinesums} appeared in an obscure paper of Erd\H{o}s from 1948 \cite{ER}, in which he showed that
\begin{equation}\label{erdos}
\sum_{\substack{n=1 \\ \| n \alpha \| \ge 1/(2N)}}^N \frac{1}{\| n \alpha \|} \sim 2N \log N \qquad \textrm{and} \qquad \sum_{n=1}^N \frac{1}{n \| n \alpha \|} \sim (\log N)^2 \qquad \textrm{for a.e. } \alpha .
\end{equation}
This result seems to have escaped the attention of several later authors who subsequently proved weaker estimates in the metric setting. We learned about \eqref{erdos} from the recent historical survey \cite{BEL}, and it served as the starting point of our investigations. Our next result improves \eqref{erdos} by finding the precise order of the error term.
\begin{thm}\label{aetheorem} Let $c>0$ be an arbitrary constant, and let $\varphi$ be a positive nondecreasing function on $(0,\infty)$. If $\sum_{k=1}^{\infty} 1/\varphi(k)<\infty$, then for a.e.\ $\alpha$,
\[ \begin{split} \sum_{\substack{n=1 \\ \| n \alpha \| \ge c/N}}^N \frac{1}{\| n \alpha \|} &= 2N \log N + O( N \varphi(\log N)^{1/2}), \\ \sum_{n=1}^N \frac{1}{n \| n \alpha \|} &= (\log N)^2 + O(\varphi(\log N) + \log N \log \log N) \end{split} \]
with implied constants depending only on $c$, $\varphi$ and $\alpha$. If $\sum_{k=1}^{\infty} 1/\varphi (k)=\infty$, then for a.e.\ $\alpha$ the sets
\[ \begin{split} \Bigg\{ N \in \mathbb{N} \, : \, \sum_{\substack{n=1 \\ \| n \alpha \| \ge c/N}}^N \frac{1}{\| n \alpha \|} &\ge 2N \log N + N \varphi (\log N)^{1/2} \Bigg\} , \\ \Bigg\{ N \in \mathbb{N} \, : \, \sum_{n=1}^N \frac{1}{n \| n \alpha \|} &\ge (\log N)^2 + \varphi (\log N) \Bigg\} \end{split} \]
have upper asymptotic density 1.
\end{thm}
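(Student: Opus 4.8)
The plan is to translate both sums into functionals of the continued fraction expansion $\alpha=[a_0;a_1,a_2,\dots]$ with convergent denominators $q_k$, and then to run a metric analysis driven by the Borel--Bernstein theorem. First I would fix $N$ and let $K=K(\alpha,N)$ be the index with $q_K\le N<q_{K+1}$. Using the three-distance theorem and the Ostrowski representation of the integers $n\le N$, I expect to show that the deviation of each sum from its main term is, up to $O(N)$ resp.\ $O(\log N)$, governed by the partial quotients $a_{k+1}$. Writing $\eta_k=\|q_k\alpha\|\asymp 1/q_{k+1}$, the multiples $mq_k$ contribute $\sum_{m}1/(mq_k\cdot m\eta_k)\asymp a_{k+1}$ to the second sum, so that
\[ \sum_{n=1}^N\frac{1}{n\|n\alpha\|}=(\log N)^2+\Theta\!\Big(\sum_{k\le K}a_{k+1}\Big)+O(\log N), \]
whereas for the truncated first sum the same multiples contribute an excess $N\,g(M)$ with $g(M)=(A/M)\log\!\big(M^2/(cA)\big)$, $A=a_{k+1}$, $M=N/q_k$; the unimodal profile $g$ peaks at $M\asymp\sqrt{A}$ with $g\asymp\sqrt{A}$, i.e.\ near the geometric mean $N\asymp\sqrt{q_kq_{k+1}}$ the excess is $\asymp N\,a_{k+1}^{1/2}$. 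This is exactly where the exponent $1/2$ enters: the per-level excess of the first sum is of order $N\,a_{k+1}^{1/2}$, against $a_{k+1}$ for the second sum, and both are tied to the \emph{same} partial quotients.

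With this reduction the two error terms become questions about the partial quotients. By L\'evy's theorem $\log q_k\sim\gamma k$ with $\gamma=\pi^2/(12\log 2)$, so $K\sim(\log N)/\gamma$ and $\varphi(\log N)\asymp\varphi(K)$. In the convergence case $\sum 1/\varphi<\infty$, the Borel--Bernstein theorem gives $a_k<\varphi(k)$ for all large $k$, for a.e.\ $\alpha$; feeding this into the per-level bounds yields, for the second sum, a large-partial-quotient contribution $O(\varphi(\log N))$ on top of the bulk term $\sum_{k\le K}\min\{a_{k+1},\varphi(k)\}$, which by Khinchin's trimmed law of large numbers is $\sim\frac{1}{\log 2}K\log K\asymp\log N\log\log N$. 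For the first sum the analogous bookkeeping gives a bulk term $O(N)$ and a large-partial-quotient term $O\big(N\,\varphi(\log N)^{1/2}\big)$. Making the bulk estimates precise (in particular the $\log N\log\log N$ term) requires the trimmed-sum asymptotics for $\sum a_k$ together with quantitative control of the correlations of the partial quotients, for which I would use the exponential mixing of the Gauss map.

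In the divergence case $\sum 1/\varphi=\infty$, the Borel--Bernstein theorem supplies infinitely many indices $k$ with $a_{k+1}\ge\varphi(k+1)$. Each such index produces, through the per-level analysis above, a band of values of $N$ near $\sqrt{q_kq_{k+1}}$ on which the first sum exceeds $2N\log N+N\varphi(\log N)^{1/2}$, and all $N\ge q_{k+1}$ on which the second sum carries the extra $\asymp a_{k+1}\ge\varphi$. The task is then to show that the union of these bands has upper asymptotic density $1$. Unwinding the profile $g$ shows that a single large partial quotient of size $A=a_{k+1}$ fills a fraction $1-O(A^{-1/2})$ of $[1,q_{k+1}]$ with admissible $N$ at the relaxed threshold, so passing to a subsequence of indices along which $a_{k+1}$ grows relative to $\varphi$ drives the upper density to $1$.

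The hard part, and the step I expect to be the main obstacle, is precisely this last density statement. It is not enough to know that the exceedance events occur infinitely often; one must (i) control the deviation of each sum as a genuine function of $N$ across a whole band, not merely at a single scale, so that the super-level set is an honest interval of the expected length, and (ii) convert the infinitely-often conclusion of Borel--Bernstein into a density-$1$ statement, which is delicate because the partial quotients are only quasi-independent and because, for slowly growing $\varphi$, the threshold competes with the bulk term $\log N\log\log N$ rather than with the extreme partial quotients. Handling both mechanisms uniformly in $\varphi$, while keeping the two sums tied to the same divergence condition $\sum 1/\varphi=\infty$, is where the real work lies; the convergence direction, by contrast, is a comparatively soft Borel--Cantelli-plus-summation argument.
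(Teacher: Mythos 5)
Your skeleton is the same as the paper's: a deterministic evaluation of both sums in terms of the continued fraction data (your per-level excesses, $\asymp N a_{k+1}^{1/2}$ for the truncated sum near $N\asymp (q_kq_{k+1})^{1/2}$ and $\asymp a_{k+1}$ for the weighted sum, are precisely the secondary/error terms of Theorems \ref{firstsumtheorem} and \ref{secondsumtheorem}), followed by Khinchin--L\'evy, Borel--Bernstein and the Diamond--Vaaler trimmed-sum theorem. One simplification: you do not need exponential mixing of the Gauss map or any correlation control beyond Diamond--Vaaler; in the convergence case $a_k\le\varphi(k)$ for large $k$ together with the trimmed-sum asymptotics already gives $s_K\ll\varphi(\log N)+\log N\log\log N$, and for the first sum the only extra observation needed is that $\sum 1/\varphi<\infty$ forces $k/\varphi(k)\to\infty^{-1}\cdot k\to 0$, hence $\log\log N\ll\varphi(\log N)^{1/2}$, so the discrepancy term is absorbed.

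The genuine gap is in the density step for the first sum, which you yourself flag as the obstacle, and your quantitative picture of it is backwards. A partial quotient $a_{K+1}=A$ produces exceedance of the threshold only on a band of the form $\bigl[4(cA)^{1/2}q_K,\ \varepsilon^{-1}A^{1/2}q_K\bigr]$, which occupies an $O(\varepsilon^{-1}A^{-1/2})$ fraction of $[1,q_{K+1}]$ --- not a $1-O(A^{-1/2})$ fraction as you claim; were your claim true, density $1$ would be immediate, but it is false because the excess $q_{K+1}/N\asymp A/M$ drops below $\varphi^{1/2}\approx A^{1/2}$ as soon as $M=N/q_K$ exceeds $\asymp A^{1/2}$. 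The correct mechanism is to measure the band against its own right endpoint $N_0=\varepsilon^{-1}A^{1/2}q_K$, where it has proportion $1-4c^{1/2}\varepsilon$, and take the limsup along such $N_0$. To guarantee the excess still beats $N\varphi(\log N)^{1/2}$ over this longer band one applies Borel--Bernstein not to $\varphi$ but to the inflated function $\varphi^*(x)=4\varepsilon^{-2}\varphi(100x)+x$, which still satisfies $\sum_k 1/\varphi^*(k)=\infty$, and finally lets $\varepsilon\to 0$. Your phrase about ``passing to a subsequence along which $a_{k+1}$ grows relative to $\varphi$'' is the right instinct, but without the explicit inflation and the choice of reference point the density-$1$ conclusion does not follow. (For the second sum your picture is correct: the exceedance holds on all of $[q_K,q_{K+1})$, which is a $1-O(1/A)$ fraction of $[1,q_{K+1})$, once one pre-inflates $\varphi$ to also dominate the bulk term $K\log K\log\log K$.)
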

\noindent Recall that the upper asymptotic density of a set $A \subseteq \mathbb{N}$ is defined as $\limsup_{N \to \infty} |A \cap [1,N]|/N$. In particular, for a.e.\ $\alpha$ we have
\[ \begin{split} \sum_{\substack{n=1 \\ \| n \alpha \| \ge c/N}}^N \frac{1}{\| n \alpha \|} &= 2N \log N +O(N (\log N)^{1/2} (\log \log N)^{1/2+\varepsilon}), \\ \sum_{n=1}^N \frac{1}{n \| n \alpha \|} &= (\log N)^2 +O(\log N (\log \log N)^{1+\varepsilon}) \end{split} \]
with any $\varepsilon >0$, but these fail with $\varepsilon =0$. Note that the cutoff $\| n \alpha \| \ge c/N$ in the first sum is necessary, as for a.e.\ $\alpha$ we have $\| n \alpha \| < (n \log n \log \log n)^{-1}$ for infinitely many $n \in \mathbb{N}$.

Our main results, Theorems \ref{firstsumtheorem} and \ref{secondsumtheorem} on the sums in \eqref{diophantinesums} with a general irrational $\alpha$ are presented in Section \ref{mainsection}. We discuss closely related sums involving fractional parts and shifts $\| n \alpha +\beta \|$, and comment on a higher dimensional generalization in Section \ref{relatedsection}.

\section{Main estimates}\label{mainsection}

For the rest of the paper, $\alpha$ is an irrational number with continued fraction $\alpha = [a_0 ; a_1, a_2, \ldots]$ and convergents $p_k/q_k=[a_0 ; a_1, a_2, \ldots, a_k]$. Set $s_K=\sum_{k=1}^K a_k$. We refer to \cite{KH} for a general introduction to continued fractions.

\subsection{General irrationals}

In this section, we prove our main estimates for the sums in \eqref{diophantinesums} with a general irrational $\alpha$.
\begin{thm}\label{firstsumtheorem} Let $c>0$ be an arbitrary constant. For any $K \ge 0$ and $q_K \le N<q_{K+1}$,
\[ \sum_{\substack{n=1 \\ \| n \alpha \| \ge c/N}}^N \frac{1}{\| n \alpha \|} = 2N \log N +O\left( (a_{K+1}^{1/2} + \log s_{K+1}) N \right) . \]
If in addition $4 (c a_{K+1})^{1/2} q_K \le N$, then
\[ \sum_{\substack{n=1 \\ \| n \alpha \| \ge c/N}}^N \frac{1}{\| n \alpha \|} \ge 2N \log N + q_{K+1} -O((\log s_{K+1})N) . \]
The implied constants depend only on $c$.
\end{thm}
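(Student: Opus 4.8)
The plan is to reduce the reciprocal sum to the counting function
\[ A(t) := \#\{\, 1 \le n \le N : \|n\alpha\| \le t \,\}, \qquad 0 \le t \le 1/2, \]
and to pass between the two by Abel (Riemann--Stieltjes) summation:
\[ \sum_{\substack{n=1 \\ \|n\alpha\| \ge c/N}}^{N} \frac{1}{\|n\alpha\|} = \int_{c/N}^{1/2} \frac{dA(t)}{t} = 2N - \frac{A(c/N)}{c/N} + \int_{c/N}^{1/2} \frac{A(t)}{t^{2}}\, dt, \]
using $A(1/2)=N$. Since $\{n\alpha\}$ is equidistributed, $\|n\alpha\|$ has asymptotic density $2$ on $[0,1/2]$, and the heuristic $A(t)\approx 2Nt$ turns the integral into $\int_{c/N}^{1/2} 2N\,t^{-1}\,dt = 2N\log N + O(N)$. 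Thus the whole problem is to control the remainder $R(t):=A(t)-2Nt$ against the weight $t^{-2}$; the boundary term $A(c/N)/(c/N)$ is itself concentrated on the clustered points near the integers and will be folded into the near-cutoff analysis below.

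For the bulk range $t \gtrsim 1/q_{K}$ I would invoke the three-distance theorem: for $q_K \le N < q_{K+1}$ the points $\{n\alpha\}$, $1\le n\le N$, have all gaps between $\eta_K \asymp 1/q_{K+1}$ and $\eta_{K-1}\asymp 1/q_K$, where $\eta_k:=\|q_k\alpha\|$ satisfies $q_{k+1} < \eta_k^{-1} < q_{k+1}+q_k$. Hence $A(t)=2Nt+O(1)$ down to scale $1/q_K$, and summing the local discrepancy contributions of the scales $t\asymp 1/q_k$, $k\le K$, through the standard continued-fraction estimates bounds $\int_{1/q_K}^{1/2}|R(t)|\,t^{-2}\,dt$ by $O(N\log s_{K+1})$. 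This is the origin of the $\log s_{K+1}$ term and is the more routine, if bookkeeping-heavy, part of the argument.

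The delicate part, and the step I expect to be the main obstacle, is the near-cutoff range $t \lesssim 1/q_K$, where a large partial quotient $a_{K+1}$ makes $q_{K+1}\asymp a_{K+1}q_K$ exceed $N$ and clusters many points near the integers. Writing $m:=\lfloor N/q_K\rfloor\in[1,a_{K+1}]$, one has $\|jq_K\alpha\|=j\eta_K$ for $1\le j\le m$, so these terms contribute
\[ \frac{1}{\eta_K}\sum_{\substack{1\le j\le m \\ j\eta_K\ge c/N}}\frac{1}{j} \;\asymp\; q_{K+1}\log\frac{m^{2}}{c\,a_{K+1}}, \]
the cutoff removing those $j\lesssim c\,a_{K+1}/m$. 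Since the prefactor is $q_{K+1}/N\asymp a_{K+1}/m$, the excess per unit $N$ is $\asymp (a_{K+1}/m)\log\!\big(m^{2}/(c a_{K+1})\big)$, and maximizing over the position $m$ of $N$ within its block — the maximum occurring at $m\asymp a_{K+1}^{1/2}$ — bounds this contribution by $O(a_{K+1}^{1/2}N)$. Capturing the square root rather than the full $a_{K+1}$, absorbing the boundary term $A(c/N)/(c/N)$, and verifying that the symmetric neighbours of the integers and the overlap with the bulk only affect constants, is where the real work lies; together with the previous paragraph this yields the two-sided estimate $2N\log N + O((a_{K+1}^{1/2}+\log s_{K+1})N)$.

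For the sharper lower bound I would retain, rather than discard, this clustered contribution. The hypothesis $4(c a_{K+1})^{1/2}q_K\le N$ forces $m\ge 4(c a_{K+1})^{1/2}-1$, so the surviving multiples of $q_K$ range over indices $j$ from $\asymp (c a_{K+1})^{1/2}/4$ up to $m$, with ratio $m^{2}/(c a_{K+1})\ge 16$; since $\eta_K^{-1}>q_{K+1}$, their reciprocals sum to at least $q_{K+1}\log 16 > q_{K+1}$. Keeping this term and bounding the remaining (bulk) terms below by $2N\log N-O(N\log s_{K+1})$ gives the claimed $2N\log N+q_{K+1}-O(N\log s_{K+1})$. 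All implied constants are uniform once $c$ is fixed, since every estimate — the density $2$, the three-distance gap sizes, the cutoff threshold, and the optimization in $m$ — depends on $\alpha$ only through $K$, $a_{K+1}$, $s_{K+1}$ and $q_{K+1}$, exactly the quantities appearing in the statement.
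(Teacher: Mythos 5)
Your skeleton is the same as the paper's: pass to the counting function $A(t)=|\{1\le n\le N:\|n\alpha\|\le t\}|$ via $\|n\alpha\|^{-1}=\int_0^\infty t^{-2}\mathds{1}_{\{\|n\alpha\|\le t\}}\,\mathrm{d}t$, use equidistribution for large $t$, isolate the multiples of $q_K$ for small $t$, and your optimization over $m=\lfloor N/q_K\rfloor$ (maximum at $m\asymp a_{K+1}^{1/2}$) together with retaining the clustered contribution $q_{K+1}\log 16>q_{K+1}$ under $4(ca_{K+1})^{1/2}q_K\le N$ is exactly the paper's mechanism for the $a_{K+1}^{1/2}N$ term and for the lower bound. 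However, two steps are genuinely gapped. First, the bulk range: the claim that the three-distance theorem gives $A(t)=2Nt+O(1)$ for $t\ge 1/q_K$ is false. Knowing all gaps lie in $[\eta_K,\eta_{K-1}]$ only brackets $A(t)$ between roughly $2tq_K$ and $2tq_{K+1}$, and the local discrepancy at a fixed scale (even $t=1/2$) can genuinely be as large as $\asymp s_K$. Nor does ``summing local discrepancy contributions of the scales $1/q_k$'' rescue the bound: the standard scale-by-scale estimates of the form $\sum_k a_{k+1}\{q_kt\}$ integrate against $t^{-2}$ on $[1/q_K,1/2]$ to a quantity of order $q_{K+1}$, which can far exceed $N\log s_{K+1}$. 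The paper's fix is to truncate the discrepancy argument not at $1/q_K$ but at $t=R/N$ with $R\asymp s_K+N/q_K\asymp D_N(\alpha)$, precisely the scale where the main term $2Nt$ and the discrepancy balance, so that $\int_{R/N}^{1/2}D_N(\alpha)t^{-2}\,\mathrm{d}t\ll N$ and the truncation of $\int 2Nt^{-1}\,\mathrm{d}t$ costs only $O(N\log R)=O(N\log s_{K+1})$.

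Second, in the near-cutoff range you analyze only the multiples of $q_K$ and explicitly defer the rest (``the symmetric neighbours of the integers and the overlap with the bulk'') as ``where the real work lies'' --- but that deferred part is half the proof of the upper bound. One must show that the $n$ with $q_K\nmid n$ contribute only $O(N\log s_{K+1})$ on $[c/N,R/N]$. The paper does this (for $N<q_{K+1}/4$; the complementary case $N\gg q_{K+1}$ is disposed of by the pigeonhole bound $A(t)\le 4q_{K+1}t+1$) by proving that every such $n$ satisfies $\|n\alpha\|\ge 1/(4q_K)$ --- which requires the identity $q_Kp_{K-1}-q_{K-1}p_K=(-1)^K$ to locate the unique dangerous residue class $n\equiv q_{K-1}\ (\mathrm{mod}\ q_K)$ in each block of length $q_K$ and the computation $\|(jq_K+q_{K-1})\alpha\|=\|q_{K-1}\alpha\|-j\|q_K\alpha\|>1/(4q_K)$ --- and that these points are spaced like $\ell/q_K$, so that $|\{n: q_K\nmid n,\ \|n\alpha\|\le t\}|\ll Nt$. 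Without this step the asserted $O\left((a_{K+1}^{1/2}+\log s_{K+1})N\right)$ error term is not established, even though your treatment of the multiples of $q_K$ correctly captures the extremal contribution.
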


\begin{proof} If $N \le 4c$, then
\[ 0 \le \sum_{\substack{n=1 \\ \| n \alpha \| \ge c/N}}^N \frac{1}{\| n \alpha \|} \le \frac{N^2}{c} \ll 1, \]
and the claims trivially follow. We may thus assume that $N>4c$.

Summing the identity $\| n \alpha \|^{-1} = \int_0^{\infty} t^{-2} \mathds{1}_{\{ \| n \alpha \| \le t \}} \, \mathrm{d}t$ over all $1 \le n \le N$ such that $\| n \alpha \| \ge c/N$ yields
\begin{equation}\label{integral}
\sum_{\substack{n=1 \\ \| n \alpha \| \ge c/N}}^N \frac{1}{\| n \alpha \|} = \int_{c/N}^{\infty} \frac{1}{t^2} \left| \left\{ 1 \le n \le N \, : \, c/N \le \| n \alpha \| \le t \right\} \right| \, \mathrm{d}t .
\end{equation}
Let $R=\frac{1}{8} (s_K+N/q_K)+c$, and note that $c/N \le R/N \le 1/2$. The latter inequality follows from the assumption $c<N/4$ and the general estimate $s_K \le q_K$, which can be easily seen e.g.\ by induction on $K$. We will estimate the integral in \eqref{integral} on the intervals $[c/N,R/N]$, $[R/N,1/2]$ and $[1/2,\infty)$ separately.

The integral on $[1/2,\infty)$ is negligible:
\begin{equation}\label{integral1/2infty}
\int_{1/2}^{\infty} \frac{1}{t^2} \left| \left\{ 1 \le n \le N \, : \, c/N \le \| n \alpha \| \le t \right\} \right| \, \mathrm{d}t \le \int_{1/2}^{\infty} \frac{N}{t^2} \, \mathrm{d}t = 2N.
\end{equation}
Consider now the integral on $[R/N,1/2]$. Let $D_N(\alpha)$ denote the discrepancy of the point set $\{ n \alpha \}$, $1 \le n \le N$. That is,
\[ D_N(\alpha) = \sup_{I \subseteq [0,1]} \left| \sum_{n=1}^N \mathds{1}_I (\{ n \alpha \}) - \lambda (I) N \right| , \]
where the supremum is over all intervals $I \subseteq [0,1]$, and $\lambda$ is the Lebesgue measure. In particular,
\[ \left| \left\{ 1 \le n \le N \, : \, c/N \le \| n \alpha \| \le t \right\} \right| = 2 \left( t-\frac{c}{N} \right) N +O(D_N(\alpha)) \quad \textrm{uniformly in } t \in [R/N,1/2] . \]
A classical discrepancy estimate \cite[p.\ 126]{KN} states that $D_N(\alpha) \le 2 (s_K + N/q_K) \ll R$. Therefore
\begin{equation}\label{integralR/N1/2}
\begin{split}  \int_{R/N}^{1/2} \frac{1}{t^2} \left| \left\{ 1 \le n \le N \, : \, c/N \le \| n \alpha \| \le t \right\} \right| \, \mathrm{d}t &=  \int_{R/N}^{1/2} \frac{2Nt-2c+O(D_N(\alpha))}{t^2} \, \mathrm{d}t \\ &=2N \log N +O (N \log R) . \end{split}
\end{equation}
Finally, consider the integral on $[c/N,R/N]$. We will show that
\begin{equation}\label{integralc/NR/N}
\int_{c/N}^{R/N} \frac{1}{t^2} \left| \left\{ 1 \le n \le N \, : \, \| n \alpha \| \le t \right\} \right| \, \mathrm{d}t \ll (a_{K+1}^{1/2} + \log R) N .
\end{equation}
For any $1 \le n < n' \le N$ we have $1 \le n'-n <q_{K+1}$, hence by the best rational approximation property of continued fraction convergents, $\| n' \alpha - n \alpha \| \ge \| q_K \alpha \| \ge 1/(2q_{K+1})$. The pigeonhole principle thus shows that
\[ \left| \left\{ 1 \le n \le N \, : \, \| n \alpha \| \le t \right\} \right| \le 4 q_{K+1} t +1, \]
therefore
\begin{equation}\label{pigeonhole}
\int_{c/N}^{R/N} \frac{1}{t^2} \left| \left\{ 1 \le n \le N \, : \, \| n \alpha \| \le t \right\} \right| \, \mathrm{d}t \ll q_{K+1} \log R .
\end{equation}
The previous formula implies \eqref{integralc/NR/N} whenever $N \gg q_{K+1}$.

It remains to prove \eqref{integralc/NR/N} when, say, $N<q_{K+1}/4$. Let $A = \{ 1 \le n \le N \, : \, q_K \mid n \}$ and $B = \{ 1 \le n \le N \, : \, q_K \nmid n \}$. Consider an element $n=jq_K \in A$ with some integer $1 \le j \le N/q_K$, and observe that $\| jq_K \alpha \| \le t$ implies that $j \le 2q_{K+1}t$. In particular,
\[ \left| \left\{ n \in A \, : \, \| n \alpha \| \le t \right\} \right| \le \min \left\{ \frac{N}{q_K}, 2q_{K+1} t \right\} . \]
If $N \le (q_K q_{K+1})^{1/2}$, then
\[ \int_{c/N}^{R/N} \frac{1}{t^2} \left| \left\{ n \in A \, : \, \| n \alpha \| \le t \right\} \right| \, \mathrm{d}t \le \int_{c/N}^{R/N} \frac{N/q_K}{t^2} \, \mathrm{d}t \le \frac{N^2}{c q_K} \ll a_{K+1}^{1/2} N . \]
If $N> (q_K q_{K+1})^{1/2}$, then again
\[ \begin{split} \int_{c/N}^{R/N} \frac{1}{t^2} \left| \left\{ n \in A \, : \, \| n \alpha \| \le t \right\} \right| \, \mathrm{d}t &\le \int_{c/N}^{cN/(q_K q_{K+1})} \frac{2q_{K+1}t}{t^2} \, \mathrm{d}t + \int_{cN/(q_K q_{K+1})}^{\infty} \frac{N/q_K}{t^2} \, \mathrm{d}t \\ &\ll q_{K+1} \log \frac{N^2}{q_K q_{K+1}} + q_{K+1} \\ &\ll a_{K+1}^{1/2} N . \end{split} \]

Consider now $B_j = \{ n \in B \, : \, jq_K < n < (j+1)q_K \}$ with some integer $0 \le j < \lceil N/q_K \rceil$. Observe that $|n \alpha - np_K/q_K| \le n/(q_K q_{K+1}) < 1/q_K$, and that $n p_K \pmod{q_K}$ attains nonzero residue classes. Therefore the set $\{ n \alpha \}$, $n \in B_j$ coincides with $\{ \ell/q_K \, : \, 1 \le \ell \le q_K-1 \}$ (or a subset thereof in case $j=\lceil N/q_K \rceil-1$) up to an error of $1/q_K$.

Since $\mathrm{sgn}(\alpha - p_K/q_K)=(-1)^K$, for all $n \in B_j$ with $n p_K \not\equiv (-1)^{K-1} \pmod{q_K}$, we have $\| n \alpha \| \ge 1/q_K$. The identity $q_K p_{K-1} - q_{K-1} p_K = (-1)^K$ shows that the unique $n \in B_j$ with $n p_K \equiv (-1)^{K-1} \pmod{q_K}$ is $n=jq_K +q_{K-1}$. The assumption $N<q_{K+1}/4$ ensures that
\[ \left\| (jq_K +q_{K-1}) \alpha  \right\| = \| q_{K-1} \alpha \| - j \| q_K \alpha \| \ge \frac{1}{2q_K} - \frac{N}{q_K} \cdot \frac{1}{q_{K+1}} > \frac{1}{4q_K} . \]
In particular, $\| n \alpha \| \ge 1/(4q_K)$ for all $n \in B$. Since $\{ n \alpha \}$, $n \in B_j$ is well approximated by (a subset of) $\{ \ell /q_K \, : \, 1 \le \ell \le q_K-1 \}$, we have
\[ \left| \left\{ n \in B_j \, : \, \| n \alpha \| \le t \right\} \right| \ll q_K t \quad \textrm{uniformly in } 1/(4q_K) \le t \le 1/2, \]
and by summing over $0\le j < \lceil N/q_K \rceil$,
\[ \left| \left\{ n \in B \, : \, \| n \alpha \| \le t \right\} \right| \ll N t \quad \textrm{uniformly in } 1/(4q_K) \le t \le 1/2 . \]
Therefore
\[ \int_{c/N}^{R/N} \frac{1}{t^2} \left| \left\{ n \in B \, : \, \| n \alpha \| \le t \right\} \right| \, \mathrm{d}t \ll \int_{1/(4q_K)}^{R/N} \frac{Nt}{t^2} \, \mathrm{d}t \ll N \log R . \]
Adding our estimates for $n \in A$ and $n \in B$ leads to \eqref{integralc/NR/N}. Formulas \eqref{integral}--\eqref{integralc/NR/N} yield
\[ \sum_{\substack{n=1 \\ \| n \alpha \| \ge c/N}}^N \frac{1}{\| n \alpha \|} = 2N \log N + O\left( (a_{K+1}^{1/2} + \log R) N \right) = 2N \log N +O \left( (a_{K+1}^{1/2} + \log s_{K+1})N \right) , \]
as claimed.

Suppose now that $4 (c a_{K+1})^{1/2} q_K \le N$, and let us prove the lower bound. We may assume that $a_{K+1}$ is large in terms of $c$. In particular, $R/N > 8c/N$, and formulas \eqref{integral}--\eqref{integralR/N1/2} yield
\[ \sum_{\substack{n=1 \\ \| n \alpha \| \ge c/N}}^N \frac{1}{\| n \alpha \|} \ge 2N \log N + \int_{c/N}^{8c/N} \frac{1}{t^2} \left| \left\{ 1 \le n \le N \, : \, c/N \le \| n \alpha \| \le t \right\} \right| \, \mathrm{d}t - O(N \log R) . \]
Let $c/N \le t \le 8c/N$. Observe that $(q_K+q_{K+1})c/N \le j \le tq_{K+1}$ implies both $c/N \le \| j q_K \alpha \| \le t$ and $j \le N/q_K$, the latter by the assumption $4 (c a_{K+1})^{1/2} q_K \le N$. Therefore
\[ \left| \left\{ 1 \le n \le N \, : \, c/N \le \| n \alpha \| \le t \right\} \right| \ge tq_{K+1} - (q_K+q_{K+1}) c/N -1, \]
and so
\[ \int_{c/N}^{8c/N} \frac{1}{t^2} \left| \left\{ 1 \le n \le N \, : \, c/N \le \| n \alpha \| \le t \right\} \right| \, \mathrm{d}t \ge (\log 8) q_{K+1} - \frac{7}{8} (q_K +q_{K+1}) - \frac{N}{c} \ge q_{K+1}-O(N) . \]
Hence
\[ \sum_{\substack{n=1 \\ \| n \alpha \| \ge c/N}}^N \frac{1}{\| n \alpha \|} \ge 2N \log N + q_{K+1} - O(N \log R) = 2N \log N + q_{K+1} - O((\log s_{K+1}) N) , \]
as claimed.
\end{proof}

\begin{thm}\label{secondsumtheorem} For any $K \ge 0$ and $q_K \le N < q_{K+1}$,
\[ \sum_{n=1}^N \frac{1}{n \| n \alpha \|} = (\log N)^2 + \frac{\pi^2}{6} s_K + a_{K+1} \sum_{1 \le j \le N/q_K} \frac{1}{j^2} + O \left( \sum_{k=1}^{K+1} a_k^{1/2} \log a_k + (\log s_{K+1}) \log N \right) \]
with a universal implied constant.
\end{thm}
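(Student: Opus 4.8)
The plan is to reduce matters to Theorem~\ref{firstsumtheorem} by Abel summation, separating the contribution of the exceptionally small values of $\|n\alpha\|$ --- which will produce the arithmetic main terms $\tfrac{\pi^2}{6}s_K$ and $a_{K+1}\sum_{j\le N/q_K}j^{-2}$ --- from a smooth part that integrates to $(\log N)^2$. Throughout I fix a small absolute constant $c>0$ (so that all implied constants stay universal) and write $\Sigma(m):=\sum_{n=1}^m\|n\alpha\|^{-1}$. Starting from the telescoping identity $\frac1n=\frac1N+\sum_{m=n}^{N-1}\frac{1}{m(m+1)}$ and interchanging the order of summation gives
\[ \sum_{n=1}^N\frac{1}{n\|n\alpha\|}=\frac{\Sigma(N)}{N}+\sum_{m=1}^{N-1}\frac{\Sigma(m)}{m(m+1)}. \]
I then split $\Sigma(m)=\Sigma_{\ge}(m)+\Sigma_{<}(m)$ according to whether $\|n\alpha\|\ge c/m$ or $\|n\alpha\|<c/m$, and handle the two pieces separately.

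For the smooth part, Theorem~\ref{firstsumtheorem} applied with the cutoff $c/m$ gives $\Sigma_{\ge}(m)=2m\log m+O\big((a_{k+1}^{1/2}+\log s_{k+1})m\big)$ for $q_k\le m<q_{k+1}$. Substituting this and using $\sum_{m<N}\frac{2\log m}{m+1}=(\log N)^2+O(1)$ produces the leading term $(\log N)^2$. The error contributes $\sum_{m<N}\frac{a_{k(m)+1}^{1/2}+\log s_{k(m)+1}}{m+1}$, which I would bound by grouping the indices $m$ into the ranges $[q_k,q_{k+1})$: each range contributes $O\big((a_{k+1}^{1/2}+\log s_{k+1})\log a_{k+1}\big)$, and summing over $k$ (using $\sum_k\log a_{k+1}\ll\log q_{K+1}\ll\log N+\log a_{K+1}$, together with the fact that $a^{1/2}$ dominates powers of logarithms) gives exactly $O\big(\sum_{k\le K+1}a_k^{1/2}\log a_k+(\log s_{K+1})\log N\big)$.

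The crux is the singular part $\frac{\Sigma_{<}(N)}{N}+\sum_{m<N}\frac{\Sigma_{<}(m)}{m(m+1)}$. Interchanging summation, each term $\|n\alpha\|^{-1}$ enters with total weight $\sum_{n\le m<c/\|n\alpha\|}\frac{1}{m(m+1)}$ (plus the boundary $\frac1N$ when applicable), so only $n$ with $n\|n\alpha\|<c$ survive. For $c$ small these are exactly the multiples $n=jq_k$: every other $n\in[q_k,q_{k+1})$ satisfies $n\|n\alpha\|\gg 1$ by the three-distance structure already used in the proof of Theorem~\ref{firstsumtheorem}. For such $n=jq_k$ one has $\|jq_k\alpha\|=j\|q_k\alpha\|$ throughout the relevant range $j\lesssim(ca_{k+1})^{1/2}$, and since $q_{k+1}\le 1/\|q_k\alpha\|\le q_k+q_{k+1}$ we get $\frac{1}{q_k\|q_k\alpha\|}=a_{k+1}+O(1)$; hence the spike contributes $\frac{1}{n\|n\alpha\|}=\frac{1}{j^2q_k\|q_k\alpha\|}=\frac{a_{k+1}+O(1)}{j^2}$. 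Summing over $j$ and over the complete blocks $k=0,\dots,K-1$, where $\sum_{j\le(ca_{k+1})^{1/2}}j^{-2}=\frac{\pi^2}{6}+O(a_{k+1}^{-1/2})$, gives $\sum_{k=0}^{K-1}\big(\tfrac{\pi^2}{6}a_{k+1}+O(a_{k+1}^{1/2})\big)=\tfrac{\pi^2}{6}s_K+O\big(\sum_k a_k^{1/2}\big)$, while the incomplete last block $k=K$ yields $a_{K+1}\sum_{j\le N/q_K}j^{-2}+O(a_{K+1}^{1/2})$. The boundary corrections arising from spikes that cross the threshold $\|n\alpha\|=c/m$ before $m$ reaches $N$ each amount to $O(1/c)$ and sum into the same $O\big(\sum_k a_k^{1/2}\big)$ error.

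The main obstacle is exactly this singular part: one must rigorously justify that the sub-cutoff indices are precisely the $n=jq_k$, and control the transition regime $j\asymp(ca_{k+1})^{1/2}$ where a spike passes from $\Sigma_{<}$ into $\Sigma_{\ge}$, so that no main-order term is lost or double-counted against the smooth part (the super-cutoff tails of the spikes being absorbed into the $2m\log m+O(\cdot)$ of Theorem~\ref{firstsumtheorem}). One must also verify that the final block genuinely produces the \emph{truncated} sum $a_{K+1}\sum_{j\le N/q_K}j^{-2}$ rather than a completed $\tfrac{\pi^2}{6}$-term --- the two agreeing up to the admissible error $O(a_{K+1}^{1/2})$ precisely when $N/q_K$ is large. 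Everything else reduces to routine summation of convergent series and the error bookkeeping indicated above.
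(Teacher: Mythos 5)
Your proposal is correct and follows essentially the same route as the paper: both arguments isolate the indices with $n\|n\alpha\|$ small (identified via Legendre's theorem as the multiples $jq_k$ with $j\lesssim a_{k+1}^{1/2}$, yielding the terms $\tfrac{\pi^2}{6}s_K$ and $a_{K+1}\sum_{j\le N/q_K}j^{-2}$) and obtain the $(\log N)^2$ from Abel summation combined with Theorem~\ref{firstsumtheorem}. The only difference is cosmetic --- you Abel-sum first and then split $\Sigma(m)$ at the threshold $c/m$, whereas the paper splits at $1/(2n)$ first and then sums by parts --- and your treatment of the cross-threshold boundary terms ($O(1/c)$ per spike, $O(\sum_k a_k^{1/2})$ in total) correctly handles the one point where the two orderings differ.
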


\begin{proof} We estimate the contribution of the terms with $\| n \alpha \| < 1/(2n)$ and $\| n \alpha \| \ge 1/(2n)$ separately.

Let $0 \le k \le K$, and consider the integers $q_k \le n < q_{k+1}$. If $\| n \alpha \| < 1/(2n)$, then by Legendre's theorem \cite[p.\ 30]{KH} we have $q_k \mid n$. A given multiple $n=jq_k$ satisfies $\| j q_k \alpha \| < 1/(2jq_k)$ if and only if $j < (2 q_k \| q_k \alpha \|)^{-1/2}$. Therefore
\[ \sum_{\substack{q_k \le n < q_{k+1} \\ \| n \alpha \| < 1/(2n)}} \frac{1}{n \| n \alpha \|} = \sum_{1 \le j < (2q_k \| q_k \alpha \|)^{-1/2}} \frac{1}{j q_k \| j q_k \alpha \|}, \]
consequently
\begin{equation}\label{nalpha<1/2n}
\begin{split} \sum_{\substack{n=1 \\ \| n \alpha \| < 1/(2n)}}^N \frac{1}{n \| n \alpha \|} &= \sum_{k=0}^{K-1} \sum_{1 \le j < (2q_k \| q_k \alpha \|)^{-1/2}} \frac{1}{j q_k \| j q_k \alpha \|} + \sum_{1 \le j \le \min \{ (2q_K \| q_K \alpha \|)^{-1/2}, N/q_K \}}  \frac{1}{j q_K \| j q_K \alpha \|} \\ &= \sum_{k=0}^{K-1} \frac{1}{q_k \| q_k \alpha \|} \left( \frac{\pi^2}{6} + O(a_{k+1}^{-1/2}) \right) + \frac{1}{q_K \| q_K \alpha \|} \left( \sum_{1 \le j \le N/q_K} \frac{1}{j^2} +O(a_{K+1}^{-1/2}) \right) \\ &= \frac{\pi^2}{6} \sum_{k=1}^K a_k + a_{K+1} \sum_{1 \le j \le N/q_K} \frac{1}{j^2} +O \left( \sum_{k=1}^{K+1} a_k^{1/2} \right) . \end{split}
\end{equation}

Letting
\[ T_N = \sum_{\substack{n=1 \\ \| n \alpha \| \ge 1/(2N)}}^N \frac{1}{\| n \alpha \|} , \]
we have
\[ \begin{split} \frac{1}{n \| n \alpha \|} \mathds{1}_{\{ \| n \alpha \| \ge 1/(2n) \}} &= \frac{1}{n} \Bigg( \sum_{\substack{\ell =1 \\ \| \ell \alpha \| \ge 1/(2n)}}^n \frac{1}{\| \ell \alpha \|} - \sum_{\substack{\ell =1 \\ \| \ell \alpha \| \ge 1/(2n)}}^{n-1} \frac{1}{\| \ell \alpha \|} \Bigg) \\ &= \frac{1}{n} \Bigg( T_n - T_{n-1} - \sum_{\substack{\ell =1 \\ 1/(2n) \le \| \ell \alpha \| < 1/(2(n-1))}}^{n-1} \frac{1}{\| \ell \alpha \|} \Bigg) \\ &= \frac{T_n-T_{n-1}}{n} +O \left( \sum_{\ell=1}^{n-1} \mathds{1}_{\{ 1/(2n) \le \| \ell \alpha \| < 1/(2(n-1)) \}} \right) . \end{split} \]
Summation by parts resp.\ switching the order of summation leads to
\[ \sum_{\substack{n=1 \\ \| n \alpha \| \ge 1/(2n)}}^N \frac{1}{n \| n \alpha \|} = \sum_{n=1}^{N-1} \frac{T_n}{n(n+1)} + \frac{T_N}{N} + O \left( \sum_{\ell=1}^{N-1} \mathds{1}_{\{ 1/(2N) \le \| \ell \alpha \| < 1/(2\ell) \}} \right) . \]
As above, Legendre's theorem shows that $\sum_{q_k \le \ell <q_{k+1}} \mathds{1}_{\{ \| \ell \alpha \| <1/(2\ell) \}} \ll a_{k+1}^{1/2}$, hence the error term in the previous formula is $\sum_{\ell=1}^{N-1} \mathds{1}_{\{ 1/(2N) \le \| \ell \alpha \| < 1/(2\ell) \}} \ll \sum_{k=1}^{K+1} a_k^{1/2}$. An application of Theorem \ref{firstsumtheorem} with $c=1/2$ gives
\[ \begin{split} \sum_{n=1}^{N-1} \frac{T_n}{n(n+1)} &= \sum_{n=1}^{N-1} \frac{2 \log n}{n+1} + O \left( \sum_{k=0}^K \sum_{q_k \le n < q_{k+1}} \frac{a_{k+1}^{1/2}}{n} + \sum_{n=1}^{N-1} \frac{\log s_{K+1}}{n} \right) \\ &= (\log N)^2 + O \left( \sum_{k=1}^{K+1} a_k^{1/2} \log a_k + (\log s_{K+1}) \log N \right) , \end{split} \]
and $T_N/N \ll \log N + a_{K+1}^{1/2} + \log s_{K+1}$. Therefore
\[ \sum_{\substack{n=1 \\ \| n \alpha \| \ge 1/(2n)}}^N \frac{1}{n \| n \alpha \|} = (\log N)^2 + O \left( \sum_{k=1}^{K+1} a_k^{1/2} \log a_k + (\log s_{K+1}) \log N \right) , \]
which together with \eqref{nalpha<1/2n} proves the claim.
\end{proof}

\subsection{Corollaries}

Theorems \ref{firstsumtheorem} and \ref{secondsumtheorem} establish the asymptotics of the sums in \eqref{diophantinesums} for a large class of irrational $\alpha$. For instance, we immediately obtain
\[ \begin{split} \sum_{\substack{n=1 \\ \| n \alpha \| \ge c/N}}^N \frac{1}{\| n \alpha \|} &\sim 2N \log N \quad \textrm{if } a_k = o(k^2), \\ \sum_{n=1}^N \frac{1}{n \| n \alpha \|} &\sim (\log N)^2 \quad \textrm{if } s_k=o(k^2) . \end{split} \]
As a further example, consider Euler's number $e=[2;1,2,1,\ldots, 1,2n,1,\ldots]$. The convergent denominators grow at the rate $\log q_k = (k/3) \log k +O(k)$, and $s_K=K^2/9+O(K)$. Theorems \ref{firstsumtheorem} and \ref{secondsumtheorem} thus give
\[ \begin{split} \sum_{\substack{n=1 \\ \| n \alpha \| \ge c/N}}^N \frac{1}{\| ne \|} &= 2N \log N + O \left( \frac{N (\log N)^{1/2}}{(\log \log N)^{1/2}} \right) , \\ \sum_{n=1}^N \frac{1}{n \| ne \|} &= (\log N)^2 + \frac{\pi^2}{6} \cdot \frac{(\log N)^2}{(\log \log N)^2} + O \left( \frac{(\log N)^{3/2}}{(\log \log N)^{1/2}} \right) . \end{split} \]

We will need certain basic facts from the metric theory of continued fractions in order to deduce the a.e.\ asymptotics from Theorems \ref{firstsumtheorem} and \ref{secondsumtheorem}. Khinchin and L\'evy showed that $\log q_k \sim \frac{\pi^2}{12 \log 2} k$ for a.e.\ $\alpha$, whereas Borel and Bernstein proved that given a positive function $\varphi$, for a.e.\ $\alpha$ we have $a_k \ge \varphi (k)$ for infinitely many $k \in \mathbb{N}$ if and only if $\sum_{k=1}^{\infty} 1/\varphi (k) =\infty$. A theorem of Diamond and Vaaler \cite{DV} on trimmed sums of partial quotients states that
\begin{equation}\label{diamondvaaler}
\lim_{K \to \infty} \frac{\sum_{k=1}^K a_k - \max_{1 \le k \le K} a_k}{K \log K} = \frac{1}{\log 2} \qquad \textrm{for a.e. } \alpha .
\end{equation}
We refer to the monograph \cite{IK} for the proof of all these results and for more context.

\begin{proof}[Proof of Theorem \ref{aetheorem}] Assume first that $\sum_{k=1}^{\infty} 1/\varphi(k)<\infty$. By the Khinchin--L\'evy and Borel--Bernstein theorems mentioned above, we have $a_{K+1} \le \varphi (K/100) \le \varphi (\log N)$ for all but finitely many $K$, and also $\log s_{K+1} \ll \log K \ll \log \log N$. Theorem \ref{firstsumtheorem} thus yields
\[ \sum_{\substack{n=1 \\ \| n \alpha \| \ge c/N}}^N \frac{1}{\| n \alpha \|} = 2N \log N + O(N \varphi (\log N)^{1/2} + N \log \log N) . \]
Since $\varphi$ is nondecreasing and $\sum_{k=1}^{\infty} 1/\varphi (k)<\infty$, we have $k/\varphi (k) \to 0$. In particular, the $N \log \log N$ error term in the previous formula is negligible compared to $N \varphi(\log N)^{1/2}$, as claimed.

By the Diamond--Vaaler theorem \eqref{diamondvaaler},
\[ s_{K+1} \ll \max_{1 \le k \le K+1} a_k + K \log K \ll \varphi (K/100) + K \log K \ll \varphi (\log N) + \log N \log \log N . \]
Theorem \ref{secondsumtheorem} thus yields
\[ \sum_{n=1}^N \frac{1}{n \| n \alpha \|} = (\log N)^2 + O \left( \varphi(\log N) + \log N \log \log N \right) , \]
as claimed.

Assume next that $\sum_{k=1}^{\infty} 1/\varphi(k) = \infty$. Fix a small $\varepsilon>0$. The function $\varphi^*(x)=4 \varepsilon^{-2} \varphi (100x)+x$ is also positive and nondecreasing, and satisfies $\sum_{k=1}^{\infty} 1/\varphi^*(k)=\infty$. By the Borel--Bernstein theorem, for a.e.\ $\alpha$ we have $a_{K+1} \ge \varphi^*(K) \ge K$ for infinitely many $K$. Theorem \ref{firstsumtheorem} gives that for infinitely many $K$ and any $4(c a_{K+1})^{1/2} q_K \le N \le \varepsilon^{-1} a_{K+1}^{1/2} q_K$,
\[ \begin{split} \sum_{\substack{n=1 \\ \| n \alpha \| \ge c/N}}^N \frac{1}{\| n \alpha \|} &\ge 2N \log N + q_{K+1} - O((\log s_{K+1} )N) \\ &\ge 2N \log N + \varepsilon a_{K+1}^{1/2} N - O(N \log \log N) \\ &\ge 2N \log N + \frac{\varepsilon}{2} a_{K+1}^{1/2} N \\ &\ge 2N \log N + N \varphi (\log N)^{1/2} . \end{split} \]
In particular, the upper asymptotic density of the set
\[ \Bigg\{ N \in \mathbb{N} \, : \, \sum_{\substack{n=1 \\ \| n \alpha \| \ge c/N}}^N \frac{1}{\| n \alpha \|} \ge 2N \log N + N \varphi(\log N)^{1/2}  \Bigg\} \]
is at least $1-4c^{1/2} \varepsilon$. Since $\varepsilon>0$ was arbitrary, the upper asymptotic density is 1, as claimed.

Similarly, for a.e.\ $\alpha$ we have $a_{K+1}=\max_{1 \le k \le K+1} a_k \ge 2 \varphi (100K)+K \log K \log \log K$ for infinitely many $K$. Theorem \ref{secondsumtheorem} gives that for infinitely many $K$ and any $q_K \le N < q_{K+1}$,
\[ \begin{split} \sum_{n=1}^N \frac{1}{n \left\| n \alpha \right\|} &\ge (\log N)^2 + a_{K+1} - O (a_{K+1}^{1/2} \log a_{K+1} + \log N \log \log N) \\ &\ge (\log N)^2 + \frac{1}{2} a_{K+1} \ge (\log N)^2 + \varphi (\log N) . \end{split} \]
In particular, the set
\[ \Bigg\{ N \in \mathbb{N} \, : \, \sum_{n=1}^N \frac{1}{n \| n \alpha \|} \ge (\log N)^2 + \varphi (\log N) \Bigg\} \]
has upper asymptotic density 1, as claimed.
\end{proof}

\subsection{Badly approximable irrationals}

In the special case of a badly approximable $\alpha$ Theorems \ref{firstsumtheorem} and \ref{secondsumtheorem} yield the value of the sums in \eqref{diophantinesums} up to an error $O(N \log \log N)$ resp.\ $O(\log N \log \log N)$. We now show how to modify the proof to remove the factor $\log \log N$ from the error terms.

\begin{proof}[Proof of Theorem \ref{badlyapproximabletheorem}] We have $\| n \alpha \| \ge c/N$ for all $1 \le n \le N$ with a suitably small constant $0<c<1/2$ depending only on $\alpha$, hence
\[ \sum_{n=1}^N \frac{1}{\| n \alpha \|} = \int_{c/N}^{\infty} \frac{1}{t^2} |\{ 1 \le n \le N \, : \, \| n \alpha \| \le t \}| \, \mathrm{d}t . \]
The contribution of the integral on $[1/2,\infty)$ is negligible:
\[ \int_{1/2}^{\infty} \frac{1}{t^2} |\{ 1 \le n \le N \, : \, \| n \alpha \| \le t \}| \, \mathrm{d}t \le \int_{1/2}^{\infty} \frac{N}{t^2} \, \mathrm{d}t = 2N . \]
To estimate the integral on $[c/N,1/2]$, we use the local discrepancy estimates \cite[Theorem 2]{SCH}
\[ \begin{split} \max_{1 \le N<q_{K+1}} \left( |\{ 1 \le n \le N \, : \, \{ n \alpha \} \le t \}| - tN \right) &= \sum_{\substack{k=1 \\ k \textrm{ even}}}^K \{ q_k t \} \left( a_{k+1} (1-\{ q_k t \}) + \{ q_{k+1}t \} - \{ q_{k-1} t \} \right) + O(1),  \\ \min_{1 \le N<q_{K+1}} \left( |\{ 1 \le n \le N \, : \, \{ n \alpha \} \le t \}| - tN \right) &= - \sum_{\substack{k=1 \\ k \textrm{ odd}}}^K \{ q_k t \} \left( a_{k+1} (1-\{ q_k t \}) + \{ q_{k+1}t \} - \{ q_{k-1} t \} \right) + O(1)  \end{split} \]
with universal implied constants, where $\{ \cdot \}$ is the fractional part function. In the special case of a badly approximable $\alpha$ these estimates immediately show that for all $1 \le N < q_{K+1}$,
\[ \left| |\{ 1 \le n \le N \, : \, \{ n \alpha \} \le t \}| - tN \right| \ll \sum_{k=1}^K \{ q_k t \} +1 \ll \sum_{\substack{k=1 \\ q_k \le 1/t}}^K q_k t + \sum_{\substack{k=1 \\ q_k > 1/t}}^K 1 +1 \ll \log (tN)+1. \]
In the last step we used the fact that $\sum_{j=1}^k q_j \le 3q_k$, and that there are $\ll \log (B/A)+1$ convergent denominators $q_k$ that fall in any given interval $[A,B]$. Since $-\alpha$ is also badly approximable, we can similarly estimate the number of $1 \le n \le N$ such that $\{ - n \alpha \} \le t$, that is, $\{ n \alpha \} \ge 1-t$. In particular,
\begin{equation}\label{nonuniform}
|\{ 1 \le n \le N \, : \, \| n \alpha \| \le t \}| = 2tN + O \left( \log (tN) +1 \right) \quad \textrm{uniformly in } 0<t \le 1/2 .
\end{equation}
We mention that \eqref{nonuniform} can also be easily deduced from an explicit formula for the local discrepancy due to T.\ S\'os \cite{SO}. Hence
\[ \int_{c/N}^{1/2} \frac{1}{t^2} |\{ 1 \le n \le N \, : \, \| n \alpha \| \le t \}| \, \mathrm{d}t = 2N \log N +O(N), \]
and the claim $\sum_{n=1}^N 1/ \| n \alpha \| = 2N \log N +O(N)$ follows. Summation by parts then yields $\sum_{n=1}^N 1/(n \| n \alpha \|) = (\log N)^2 + O(\log N)$, as claimed.
\end{proof}

\section{Related Diophantine sums}\label{relatedsection}

\subsection{Sums with fractional parts}

Theorems \ref{badlyapproximabletheorem}, \ref{aetheorem}, \ref{firstsumtheorem}, \ref{secondsumtheorem} have perfect analogues with the distance from the nearest integer function $\| \cdot \|$ replaced by the fractional part function $\{ \cdot \}$.
\begin{thm}\label{fractionalpartfirstsumtheorem} Let $c>0$ be an arbitrary constant. For any $K \ge 0$ and $q_K \le N<q_{K+1}$,
\[ \sum_{\substack{n=1 \\ \{ n \alpha \} \ge c/N}}^N \frac{1}{\{ n \alpha \}} = N \log N + O \left( (\mathds{1}_{\{ K+1 \textrm{ odd} \}}a_{K+1}^{1/2} + \log s_{K+1})N \right) . \]
If in addition $4 (ca_{K+1})^{1/2} q_K \le N$ and $K+1$ is odd, then
\[ \sum_{\substack{n=1 \\ \{ n \alpha \} \ge c/N}}^N \frac{1}{\{ n \alpha \}} \ge N \log N + q_{K+1} - O((\log s_{K+1})N) . \]
The implied constants depend only on $c$. Further, for any $K \ge 0$ and $q_K \le N < q_{K+1}$,
\[ \begin{split} \sum_{n=1}^N \frac{1}{n \{ n \alpha \}} = &\frac{1}{2} (\log N)^2 + \frac{\pi^2}{6} \sum_{\substack{k=1 \\ k \textrm{ odd}}}^K a_k + \mathds{1}_{\{ K+1 \textrm{ odd} \}} a_{K+1} \sum_{1 \le j \le N/q_K} \frac{1}{j^2} \\ &+O \Bigg( \sum_{\substack{k=1 \\ k \textrm{ odd}}}^{K+1} a_k^{1/2} \log a_k + (\log s_{K+1}) \log N \Bigg) \end{split} \]
with a universal implied constant.
\end{thm}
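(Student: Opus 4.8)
The plan is to follow the proofs of Theorems~\ref{firstsumtheorem} and~\ref{secondsumtheorem} almost line by line, replacing the two-sided counting function $|\{n:\|n\alpha\|\le t\}|$ by the one-sided one $|\{n:\{n\alpha\}\le t\}|$ and carrying along the parity of the index of each convergent. The only new input is that a \emph{small} fractional part $\{n\alpha\}$, as opposed to a small distance $\|n\alpha\|$, can be produced only by convergents that approximate $\alpha$ from below: since $\mathrm{sgn}(\alpha-p_k/q_k)=(-1)^k$, a multiple $n=jq_k$ with $j\|q_k\alpha\|<1$ satisfies $\{jq_k\alpha\}=j\|q_k\alpha\|$ when $k$ is even and $\{jq_k\alpha\}=1-j\|q_k\alpha\|$ when $k$ is odd. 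Thus the even-indexed convergents feed the count on $[0,t]$ for small $t$, while the odd-indexed ones have fractional part near $1$ and contribute only $O(1)$ to any reciprocal sum.

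For the first estimate I would begin with
\[ \sum_{\substack{n=1 \\ \{n\alpha\}\ge c/N}}^N \frac{1}{\{n\alpha\}} = \int_{c/N}^{\infty} \frac{1}{t^2}\left|\left\{1\le n\le N:c/N\le\{n\alpha\}\le t\right\}\right|\,\mathrm{d}t \]
and split over $[c/N,R/N]$, $[R/N,1]$ and $[1,\infty)$ with $R=\tfrac18(s_K+N/q_K)+c$. On $[1,\infty)$ the integral is $O(N)$. On $[R/N,1]$ the one-sided count equals $(t-c/N)N$ with an error at most $D_N(\alpha)\ll s_K+N/q_K$, contributing $N\log N+O((\log s_{K+1})N)$; the halving of the leading coefficient relative to Theorem~\ref{firstsumtheorem} is exactly the passage from the symmetric count $2tN$ to the one-sided count $tN$. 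On $[c/N,R/N]$ I would repeat the $A=\{q_K\mid n\}$, $B=\{q_K\nmid n\}$ decomposition; by the dictionary above the multiples $n=jq_K$ fall in $[0,t]$ for small $t$ precisely when $K$ is even, so the $a_{K+1}^{1/2}$ term survives exactly when $K+1$ is odd. Under $4(ca_{K+1})^{1/2}q_K\le N$ with $K+1$ odd, the matching lower bound comes from the indices $j$ with $(q_K+q_{K+1})c/N\le j\le tq_{K+1}$, for which $\{jq_K\alpha\}=j\|q_K\alpha\|\in[c/N,t]$, yielding the surplus $q_{K+1}$ as before.

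For the second estimate I would again separate $\{n\alpha\}<1/(2n)$ from $\{n\alpha\}\ge 1/(2n)$. In the first range Legendre's theorem forces $q_k\mid n$, and the dictionary shows that $\{jq_k\alpha\}<1/(2jq_k)$ can hold only for even $k$, where it reads $j<(2q_k\|q_k\alpha\|)^{-1/2}$ exactly as in~\eqref{nalpha<1/2n}. Summing $\tfrac{\pi^2}{6}(q_k\|q_k\alpha\|)^{-1}=\tfrac{\pi^2}{6}a_{k+1}+O(a_{k+1}^{1/2})$ over even $k\le K-1$ and reindexing $m=k+1$ turns the even-$k$ sum into $\tfrac{\pi^2}{6}\sum_{m\ \mathrm{odd},\,1\le m\le K}a_m$, while the truncated block $k=K$ supplies $a_{K+1}\sum_{j\le N/q_K}1/j^2$ only when $K$ is even, i.e.\ $K+1$ is odd. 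For $\{n\alpha\}\ge 1/(2n)$ I would set $\widetilde T_n=\sum_{\ell\le n,\,\{\ell\alpha\}\ge 1/(2n)}1/\{\ell\alpha\}$, so that the summand is $(\widetilde T_n-\widetilde T_{n-1})/n$ up to the indices with $1/(2n)\le\{\ell\alpha\}<1/(2(n-1))$, which by Legendre's theorem are $O(\sum_{k\ \mathrm{odd}}a_k^{1/2})$ in number; summation by parts together with the first estimate for $c=1/2$, which gives $\widetilde T_n=n\log n+\cdots$ and thus carries half the main term of its $\|\cdot\|$ counterpart $T_n$, then produces $\tfrac12(\log N)^2$ with the stated error.

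The main obstacle is the parity bookkeeping at the boundary, in particular the fate of the exceptional residue class: in each block $B_j$ the element nearest an integer is $n=jq_K+q_{K-1}$, and one must read off from the parities of $K$ and $K-1$ whether it lies just above an integer, feeding the count on $[0,t]$, or just below, feeding the count on $[1-t,1]$. Checking in every case that the remaining elements of $B$ satisfy $\{n\alpha\}\gg 1/q_K$, so that the $B$-contribution stays $O((\log s_{K+1})N)$, and that the $A$-contribution obeys the stated parity rule, is the only delicate point; once the dictionary is fixed, each remaining inequality is a one-sided transcription of the corresponding step in Theorems~\ref{firstsumtheorem} and~\ref{secondsumtheorem}.
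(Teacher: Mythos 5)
Your proposal is correct and follows exactly the route the paper intends: the published proof of Theorem~\ref{fractionalpartfirstsumtheorem} is the one-line remark that it is a straightforward modification of Theorems~\ref{firstsumtheorem} and~\ref{secondsumtheorem}, with the parity conditions coming from $\mathrm{sgn}(\alpha - p_k/q_k) = (-1)^k$, which is precisely the dictionary you set up. Your elaboration of the one-sided count $tN$ in place of $2tN$, the parity of the blocks $A$ and $B_j$, and the reindexing $m=k+1$ in the $\{n\alpha\}<1/(2n)$ range all check out.
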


\begin{proof} This is a straightforward modification of the proof of Theorems \ref{firstsumtheorem} and \ref{secondsumtheorem}. The parity conditions follow from the fact that $p_{2k}/q_{2k} < \alpha < p_{2k+1}/q_{2k+1}$ for all $k \ge 0$.
\end{proof}

The same holds for the sums
\[ \sum_{\substack{n=1 \\ 1-\{ n \alpha \} \ge c/N}}^N \frac{1}{1-\{ n \alpha \}} \qquad \textrm{and} \qquad \sum_{n=1}^N \frac{1}{n (1-\{ n \alpha \})} \]
with ``odd'' replaced by ``even''. Theorem \ref{aetheorem} also remains true with $\| n \alpha \|$ replaced either by $\{ n \alpha \} /2$ or by $(1-\{ n \alpha \})/2$. The proof is identical to that of Theorem \ref{aetheorem}; note that the Borel--Bernstein theorem holds without any monotonicity assumption on $\varphi$, so the parity conditions do not cause any difficulty.

A straightforward modification of the proof of Theorem \ref{badlyapproximabletheorem} similarly shows that for any badly approximable $\alpha$,
\[ \sum_{n=1}^N \frac{1}{\{ n \alpha \}} = N \log N + O(N) \qquad \textrm{and} \qquad \sum_{n=1}^N \frac{1}{n \{ n \alpha \}} = \frac{1}{2} (\log N)^2 + O(\log N), \]
and the same holds with $\{ n \alpha \}$ replaced by $1-\{ n \alpha \}$.

\subsection{Shifted sums}

Some of our methods apply to shifted Diophantine sums as well. Here we only focus on the case of a badly approximable $\alpha$, for which we find the precise asymptotics. All previously known results on shifted Diophantine sums have a constant factor gap between the upper and the lower estimates \cite{BHV,BL}.
\begin{thm}\label{shiftedtheorem} Let $\alpha$ be a badly approximable irrational, and let $\beta \in \mathbb{R}$. For any $N \ge 1$,
\[ \sum_{\substack{n=1 \\ n \neq n_0}}^N \frac{1}{\| n \alpha + \beta \|} = 2N \log N + O(N \log \log N) \]
with an implied constant depending only on $\alpha$, where $n_0=n_0(\alpha, \beta, N) \in [1,N]$ is an integer for which $\min_{1 \le n \le N} \| n \alpha + \beta \| = \| n_0 \alpha + \beta \|$. If in addition $\inf_{n \in \mathbb{N}} (n \log \log n) \| n \alpha + \beta \| >0$, then
\[ \sum_{n=1}^N \frac{1}{\| n \alpha + \beta \|} = 2N \log N + O(N \log \log N), \qquad \sum_{n=1}^N \frac{1}{n \| n \alpha + \beta \|} = (\log N)^2 + O(\log N \log \log N) \]
with implied constants depending only on $\alpha$ and $\beta$.
\end{thm}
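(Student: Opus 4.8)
The plan is to run the integral-representation method from the proof of Theorem~\ref{badlyapproximabletheorem}, the only essential new difficulty being that the shift $\beta$ lets $\|n\alpha+\beta\|$ become arbitrarily small, so the lower end of the integral can no longer be controlled by a blanket lower bound on the fractional parts. I would first record the two facts about a badly approximable $\alpha$ that drive everything. (i) \emph{Separation}: since $\|k\alpha\|\ge c_\alpha/|k|$ for all nonzero $k$, any two points $n\alpha+\beta$, $m\alpha+\beta$ with $1\le n\neq m\le N$ satisfy $\|(n-m)\alpha\|\ge c_\alpha/N$; hence at most one $n\in[1,N]$ can have $\|n\alpha+\beta\|<c_\alpha/(2N)$, and that $n$, if it exists, is exactly $n_0$. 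Consequently, after deleting $n_0$ every remaining term satisfies $\|n\alpha+\beta\|\ge c_\alpha/(2N)$, and the same separation yields the pigeonhole bound $|\{1\le n\le N:\|n\alpha+\beta\|\le t\}|\ll tN+1$. (ii) \emph{Discrepancy}: for badly approximable $\alpha$ the inhomogeneous counting function satisfies
\[ |\{1\le n\le N:\|n\alpha+\beta\|\le t\}| = 2tN + O(\log N) \qquad \text{uniformly in } t\in(0,1/2] \text{ and } \beta, \]
which is the inhomogeneous analogue of the estimate \eqref{nonuniform} used for $\beta=0$ (it is proved exactly as in the homogeneous case, via the three-distance theorem and the Ostrowski expansions of $N$ and $\beta$).

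With these in hand I would write, as in \eqref{integral},
\[ \sum_{\substack{n=1\\ n\neq n_0}}^N \frac{1}{\|n\alpha+\beta\|} = \int_{c_\alpha/(2N)}^{\infty} \frac{1}{t^2}\,|\{1\le n\le N,\ n\neq n_0:\|n\alpha+\beta\|\le t\}|\,\mathrm{d}t, \]
the integral starting at $c_\alpha/(2N)$ by fact (i), since the integrand vanishes below that threshold. The contribution of $[1/2,\infty)$ is $O(N)$ as before. The new idea is to split the remaining range at $t=(\log N)/N$. On the upper range $[(\log N)/N,1/2]$ I would insert the density estimate (ii), which is needed here for the correct constant (the pigeonhole bound would produce $2tN/c_\alpha$ with the wrong factor): the main term $\int 2N/t\,\mathrm{d}t$ equals $2N\log N - 2N\log\log N + O(N)$, while the $O(\log N)$ error integrates to $O(\log N\cdot N/\log N)=O(N)$. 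On the bottom range $[c_\alpha/(2N),(\log N)/N]$ I would use only the pigeonhole bound from (i): the integrand is $\ll (tN+1)/t^2$, and since this range has logarithmic length $\log\log N+O(1)$, its integral is $O(N\log\log N)$; for the lower bound the bottom range is simply dropped. Adding the three ranges, the deficit $-2N\log\log N$ left by starting the clean density estimate only at $(\log N)/N$ is of the same size as the admissible error, so one obtains $2N\log N+O(N\log\log N)$ for the first sum without any sharp information about the bottom.

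The main obstacle is precisely this bottom range, where the weight $t^{-2}$ amplifies every error: the uniform $O(\log N)$ discrepancy bound used alone would give a useless $O(N\log N)$, while a refined local discrepancy of the Schmidt type (which produced the $O(\log(tN))$ gain, and hence the $O(N)$ error, when $\beta=0$) is not available uniformly in $\beta$. The resolution above replaces such a refined estimate by the crude pigeonhole bound on a range of length $\log\log N$, at the cost of the $\log\log N$ factor. That this loss is genuine, and not an artifact, is reflected by the cutoff $(\log N)/N$ being forced and by the sharp form $n\log\log n$ of the Diophantine hypothesis in the second statement.

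For the second statement, under $\inf_n (n\log\log n)\|n\alpha+\beta\|>0$ the deleted term is harmless: $1/\|n_0\alpha+\beta\|\le n_0\log\log n_0/c' \le N\log\log N/c' = O(N\log\log N)$, so reinstating $n_0$ keeps the first sum at $2N\log N+O(N\log\log N)$. As this holds for every $N$ (the hypothesis being uniform in $n$), I would finish by summation by parts exactly as in the proof of Theorem~\ref{secondsumtheorem}: writing $S_N=\sum_{n\le N}\|n\alpha+\beta\|^{-1}$, one has $\sum_{n\le N}(n\|n\alpha+\beta\|)^{-1}=\sum_{n<N}S_n/(n(n+1))+S_N/N$, and inserting $S_n=2n\log n+O(n\log\log n)$ yields $(\log N)^2+O(\log N\log\log N)$.
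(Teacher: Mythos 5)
Your proposal is correct and follows essentially the same route as the paper: the paper likewise notes that the separation property forces $\|n\alpha+\beta\\|\ge c/N$ for all $n\neq n_0$, and then reuses the integral representation, the discrepancy estimate on the middle range, and the pigeonhole bound on the bottom range (with cutoff $R/N$, $R\asymp\log N$, in place of your $(\log N)/N$ — a cosmetic difference), the $O(N\log\log N)$ error arising in both treatments from the $\log R\asymp\log\log N$ length of the bottom range. The handling of the $n_0$ term under the extra Diophantine hypothesis and the concluding summation by parts are also identical to the paper's.
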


\begin{proof} For any integers $1 \le n < n' \le N$, we have $\| (n \alpha + \beta) - (n' \alpha + \beta) \| \ge 2c/N$ with a suitably small constant $c>0$ depending only on $\alpha$. In particular, for all $1 \le n \le N$, $n \neq n_0$ we have $\| n \alpha + \beta \| \ge c/N$. One readily checks that formulas \eqref{integral}, \eqref{integral1/2infty}, \eqref{integralR/N1/2} and \eqref{pigeonhole} in the proof of Theorem \ref{firstsumtheorem} remain true with an arbitrary shift $\beta$, hence
\[ \sum_{\substack{n=1 \\ n \neq n_0}}^N \frac{1}{\| n \alpha + \beta \|} = 2N \log N + O(N \log \log N) , \]
as claimed.

Under the additional assumption $\inf_{n \in \mathbb{N}} (n \log \log n) \| n \alpha + \beta \| >0$ the contribution of the $n=n_0$ term is $O(N \log \log N)$, thus $\sum_{n=1}^N 1/\| n \alpha + \beta \| = 2N \log N + O(N \log \log N)$, as claimed. Summation by parts then yields $\sum_{n=1}^N 1/(n \| n \alpha + \beta \|) = (\log N)^2 + O(\log N \log \log N)$, as claimed.
\end{proof}

Similarly, for any badly approximable $\alpha$ and any $\beta \in \mathbb{R}$,
\[ \sum_{\substack{n=1 \\ n \neq n_0'}}^N \frac{1}{\{ n \alpha + \beta \}} = N \log N + O(N \log \log N) \]
with an implied constant depending only on $\alpha$, where $n_0' = n_0'(\alpha, \beta, N) \in [1,N]$ is an integer for which $\min_{1 \le n \le N} \{ n \alpha + \beta \} = \{ n_0' \alpha + \beta \}$. If in addition $\inf_{n \in \mathbb{N}} (n \log \log n) \{ n \alpha + \beta \} >0$, then
\[ \sum_{n=1}^N \frac{1}{\{ n \alpha + \beta \}} = N \log N + O(N \log \log N), \qquad \sum_{n=1}^N \frac{1}{n \{ n \alpha + \beta \}} = \frac{1}{2} (\log N)^2 + O(\log N \log \log N) \]
with implied constants depending only on $\alpha$ and $\beta$. The same holds with $\{ n \alpha + \beta \}$ replaced by $1-\{ n \alpha + \beta \}$.

\subsection{A higher dimensional generalization}

There are several natural higher dimensional generalizations of the sums in \eqref{diophantinesums}, which have been studied using Fourier analysis \cite{BE1,LV}, geometry of numbers \cite{BHV} and lattices \cite{FR1,FR2}. In this setting, a vector $\alpha \in \mathbb{R}^d$ is called badly approximable if $\inf_{n \in \mathbb{Z}^d \backslash \{ 0 \}} \| n \|_{\infty}^d \cdot \| n_1 \alpha_1 + \cdots +n_d \alpha_d  \| >0$, where $\| n \|_{\infty} = \max_{1 \le k \le d} |n_k|$. Here we only comment on a result of Fregoli \cite{FR1}, who showed that for any badly approximable vector $\alpha \in \mathbb{R}^d$,
\[ N^d \log N \ll \sum_{n \in [-N,N]^d \backslash \{ 0\} } \frac{1}{\| n_1 \alpha_1 + \cdots + n_d \alpha_d \|} \ll N^d \log N . \]
His main result in fact yields the precise asymptotics of the previous sum, in particular giving an alternative proof of Theorem \ref{badlyapproximabletheorem} based on lattices.

\begin{thm} Let $\alpha \in \mathbb{R}^d$ be a badly approximable vector. For any $N \ge 1$,
\[ \begin{split} \sum_{n \in [-N,N]^d \backslash \{ 0\} } \frac{1}{\| n_1 \alpha_1 + \cdots + n_d \alpha_d \|} &= d 2^{d+1} N^d \log N + O(N^d), \\ \sum_{n \in [-N,N]^d \backslash \{ 0\} } \frac{1}{\| n \|_{\infty}^d \cdot \| n_1 \alpha_1 + \cdots + n_d \alpha_d \|} &= d^2 2^d (\log N)^2 + O(\log N) \end{split} \]
with implied constants depending only on $\alpha$.
\end{thm}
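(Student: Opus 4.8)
The plan is to mirror the two‑step scheme used in the proofs of Theorems \ref{firstsumtheorem} and \ref{badlyapproximabletheorem}: first reduce each sum to a lattice point counting problem through an integral representation, and then read off the asymptotics by integrating a sharp count. Writing $L(n)=n_1\alpha_1+\cdots+n_d\alpha_d$, the badly approximable hypothesis $\inf_{n\neq 0}\|n\|_{\infty}^d\|L(n)\|>0$ furnishes a constant $c>0$ with $\|L(n)\|\ge c/\|n\|_{\infty}^d\ge c/N^d$ for every $n\in[-N,N]^d\setminus\{0\}$. Summing the identity $\|L(n)\|^{-1}=\int_0^{\infty}t^{-2}\mathds{1}_{\{\|L(n)\|\le t\}}\,\mathrm{d}t$ over the box then gives
\[ \sum_{n\in[-N,N]^d\setminus\{0\}}\frac{1}{\|L(n)\|}=\int_{c/N^d}^{\infty}\frac{1}{t^2}\left|\left\{n\in[-N,N]^d\setminus\{0\}:\|L(n)\|\le t\right\}\right|\,\mathrm{d}t, \]
the lower cutoff $c/N^d$ being exactly the point below which the counting function vanishes. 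As in \eqref{integral1/2infty}, the tail $\int_{1/2}^{\infty}$ contributes only $O(N^d)$, since the count never exceeds $(2N+1)^d$.

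The heart of the matter is a sharp count on $[c/N^d,1/2]$. By equidistribution one expects $|\{n\in[-N,N]^d\setminus\{0\}:\|L(n)\|\le t\}|=2^{d+1}N^dt+E(t)$, where the factor $2^{d+1}$ combines the $(2N+1)^d\approx 2^dN^d$ lattice points of the box with the length $2t$ of the target set $\{x:\|x\|\le t\}$. Granting a suitable bound on $E(t)$, the main term integrates to $2^{d+1}N^d\int_{c/N^d}^{1/2}t^{-1}\,\mathrm{d}t=2^{d+1}N^d(d\log N+O(1))=d2^{d+1}N^d\log N+O(N^d)$, which is the claimed asymptotics for the first sum. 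The natural tool for the count is the Dani correspondence: the map $n\mapsto(n,L(n)+m)$ realises a unimodular lattice $\Lambda\subset\mathbb{R}^{d+1}$, and counting $\Lambda$ in $[-N,N]^d\times[-t,t]$ becomes, under the volume‑preserving diagonal element $g=\mathrm{diag}(N^{-1},\dots,N^{-1},N^d)$, the problem of counting $g\Lambda$ inside $[-1,1]^d\times[-T,T]$ with $T=N^dt$. Badly approximability is precisely the statement that $\{g\Lambda:N\ge 1\}$ stays in a fixed compact part of the space of unimodular lattices, so that $g\Lambda$ admits a uniform lower bound on its shortest vector; this is Fregoli's viewpoint in \cite{FR1}, whose main counting theorem supplies the estimate.

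The hard part will be controlling $E(t)$ \emph{uniformly in} $t$ with enough decay. After the substitution $T=N^dt$ the requirement $\int_{c/N^d}^{1/2}t^{-2}E(t)\,\mathrm{d}t=O(N^d)$ becomes $\int_c^{N^d/2}T^{-2}E(T)\,\mathrm{d}T=O(1)$, so the crude box‑counting bound $E(T)=O(T)$—which only yields an extra factor $\log N$—is insufficient; one needs $E(T)=o(T)$ with $\int^{\infty}T^{-2}E(T)\,\mathrm{d}T$ convergent, for instance $E(T)\ll T^{1-\delta}$, and with constants uniform over the compact family $g\Lambda$. This is exactly the sharp lattice point count for long thin boxes under the bounded geometry granted by badly approximability, and it is where the geometry of numbers does the real work; for $d=1$ it recovers the refined discrepancy estimate $O(\log(tN))$ used in the proof of Theorem \ref{badlyapproximabletheorem}.

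Finally, I would deduce the second sum from the first by summation by parts over the cubic shells $\|n\|_{\infty}=m$, mirroring the one‑dimensional passage from $\sum 1/\|n\alpha\|$ to $\sum 1/(n\|n\alpha\|)$. Setting $T_M=\sum_{n\in[-M,M]^d\setminus\{0\}}\|L(n)\|^{-1}=d2^{d+1}M^d\log M+O(M^d)$ and noting that the shell $\|n\|_\infty=m$ contributes $T_m-T_{m-1}$, Abel summation gives $\sum_{n}\|n\|_{\infty}^{-d}\|L(n)\|^{-1}=N^{-d}T_N+\sum_{m=1}^{N-1}T_m(m^{-d}-(m+1)^{-d})$. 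Using $m^{-d}-(m+1)^{-d}=d\,m^{-d-1}+O(m^{-d-2})$ together with the asymptotics for $T_m$ reduces the sum to $d^2 2^{d+1}\sum_{m\le N}m^{-1}\log m+O(\log N)=d^2 2^d(\log N)^2+O(\log N)$, as claimed.
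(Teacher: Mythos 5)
Your proposal is correct and follows essentially the same route as the paper: the integral representation with lower cutoff $c/N^d$, the count $2^{d+1}tN^d+E(t)$ supplied by Fregoli's main result \cite[Proposition 1.3]{FR1} (whose error term $O(t^{d/(d+1)}N^{d^2/(d+1)})$ is exactly the $E(T)\ll T^{1-\delta}$ with $\delta=1/(d+1)$ that you identify as the needed input), and Abel summation over the shells $\|n\|_{\infty}=m$ for the second sum. The one place you hedge --- whether the uniform error bound actually holds --- is precisely what the paper takes verbatim from \cite{FR1}, so nothing further is needed.
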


\begin{proof} The main result \cite[Proposition 1.3]{FR1} states that
\[ |\{ n \in [-N,N]^d \backslash \{ 0 \} \, : \, \| n_1 \alpha_1 + \cdots + n_d \alpha_d \| \le t \}| = 2^{d+1} t N^d + O \left( t^{d/(d+1)} N^{d^2/(d+1)} \right) \]
uniformly in $0<t \le 1/2$. Using this instead of formula \eqref{nonuniform}, the rest of the proof is identical to that of Theorem \ref{badlyapproximabletheorem}. Writing
\[ \sum_{n \in [-N,N]^d \backslash \{ 0\} } \frac{1}{\| n \|_{\infty}^d \cdot \| n_1 \alpha_1 + \cdots + n_d \alpha_d \|} = \sum_{\ell=1}^N \frac{1}{\ell^d} \sum_{\| n \|_{\infty}=\ell} \frac{1}{\| n_1 \alpha_1 + \cdots + n_d \alpha_d \|}, \]
the second claim follows from summation by parts.
\end{proof}

\section*{Acknowledgments} The author is supported by the Austrian Science Fund (FWF) project M 3260-N.


\begin{thebibliography}{99}
\footnotesize{

\bibitem{BE1} J.\ Beck: \textit{Probabilistic Diophantine approximation, I. Kronecker sequences.} Ann.\ of Math.\ 140 (1994), no.\ 2, 449--502.

\bibitem{BE2} J.\ Beck: \textit{Probabilistic Diophantine Approximation. Randomness in Lattice Point Counting.} Springer Monographs in Mathematics, Springer, Cham, 2014.

\bibitem{BEL} J.\ Bell: \textit{Sums, series, and products in Diophantine approximation.} arXiv:2302.05857.

\bibitem{BHV} V.\ Beresnevich, A.\ Haynes and S.\ Velani: \textit{Sums of reciprocals of fractional parts and multiplicative Diophantine approximation.} Mem.\ Amer.\ Math.\ Soc.\ 263 (2020), no.\ 1276.

\bibitem{BL} V.\ Beresnevich and N.\ Leong: \textit{Sums of reciprocals and the three distance theorem.} arXiv:1712.03758.

\bibitem{BB1} I.\ Berkes and B.\ Borda: \textit{On the discrepancy of random subsequences of $\{ n \alpha \}$.} Acta Arith.\ 191 (2019), 383--415.

\bibitem{BB2} I.\ Berkes and B.\ Borda: \textit{Random walks on the circle and Diophantine approximation.} To appear in J.\ Lond.\ Math.\ Soc. https://doi.org/10.1112/jlms.12749.

\bibitem{BW} I.\ Berkes and M.\ Weber: \textit{On the convergence of $\sum c_k f(n_k x)$.} Mem.\ Amer.\ Math.\ Soc.\ 201 (2009), no.\ 943.

\bibitem{BO1} B.\ Borda: \textit{Lattice points in algebraic cross-polytopes and simplices.} Discrete Comput.\ Geom.\ 60 (2018), 145--169.

\bibitem{BO2} B.\ Borda: \textit{On the distribution of Sudler products and Birkhoff sums for the irrational rotation.} arXiv:2104.06716.

\bibitem{BO3} B.\ Borda: \textit{On the theorem of Davenport and generalized Dedekind sums.} J.\ Number Theory 172 (2017), 1--20.

\bibitem{CM} F.\ Chamizo and B.\ Martin: \textit{The convergence of certain Diophantine series.} J.\ Number Theory 229 (2021), 179--198.

\bibitem{CT} S.\ Chow and N.\ Technau: \textit{Littlewood and Duffin--Schaeffer-type problems in Diophantine approximation.} To appear in Mem.\ Amer.\ Math.\ Soc.\ arXiv:2010.09069.

\bibitem{DV} H.\ Diamond and J.\ Vaaler: \textit{Estimates for partial sums of continued fraction partial quotients.} Pacific J.\ Math.\ 122 (1986), 73--82.

\bibitem{DF1} D.\ Dolgopyat and B.\ Fayad: \textit{Deviations of ergodic sums for toral translations II. Boxes.} Publ.\ Math.\ Inst.\ Hautes \'Etudes Sci.\ 132 (2020), 293--352.

\bibitem{DF2} D.\ Dolgopyat and B.\ Fayad: \textit{Limit theorems for toral translations.} Hyperbolic Dynamics, Fluctuations and Large Deviations 227--277. Proc.\ Sympos.\ Pure Math.\ 89.\ Amer.\ Math.\ Soc., Providence, RI, 2015.

\bibitem{DS} D.\ Dolgopyat and O.\ Sarig: \textit{Quenched and annealed temporal limit theorems for circle rotations.} Ast\'erisque 415 (2020), 57--83.

\bibitem{ER} P.\ Erd\H{o}s: \textit{Some remarks on Diophantine approximations.} J.\ Indian Math.\ Soc.\ 12 (1948), 67--74.

\bibitem{FR1} R.\ Fregoli: \textit{Sums of reciprocals of fractional parts.} Int.\ J.\ Number Theory 15 (2019), 789--797.

\bibitem{FR2} R.\ Fregoli: \textit{Sums of reciprocals of fractional parts II.} With an Appendix by M.\ Bj\"orklund, R.\ Fregoli and A.\ Gorodnik. arXiv:2304.02566.

\bibitem{HO} S.\ Haber and C.\ Osgood: \textit{On the sum $\sum \langle n \alpha \rangle^{-t}$ and numerical integration.} Pacific J.\ Math.\ 31 (1969), 383--394.

\bibitem{HL1} G.\ Hardy and J.\ Littlewood: \textit{Some problems of Diophantine approximation: a series of cosecants.} Bull.\ Calcutta Math.\ Soc.\ 20 (1930), 251--266.

\bibitem{HL2} G.\ Hardy and J.\ Littlewood: \textit{Some problems of Diophantine approximation: the lattice points of a right-angled triangle.} Proc.\ London Math.\ Soc.\ 20 (1921), 15--36.

\bibitem{HL3} G.\ Hardy and J.\ Littlewood: \textit{Some problems of Diophantine approximation: the lattice points of a right-angled triangle. (Second memoir.)} Abh.\ Math.\ Sem.\ Univ.\ Hamburg 1 (1922), 211--248.

\bibitem{IK} M.\ Iosifescu and C.\ Kraaikamp: \textit{Metrical Theory of Continued Fractions.} Mathematics and its Applications, 547. Kluwer Academic Publishers, Dordrecht, 2002.

\bibitem{KH} A.\ Khinchin: \textit{Continued Fractions.} University of Chicago Press, Chicago, Ill.-London, 1964.

\bibitem{KR} A.\ Kruse: \textit{Estimates of $\sum_{k=1}^N k^{-s} \langle kx \rangle^{-t}$.} Acta Arith.\ 12 (1966/1967), 229--261.

\bibitem{KN} L.\ Kuipers and H.\ Niederreiter: \textit{Uniform Distribution of Sequences.} Pure and Applied Mathematics. Wiley-Interscience, New York-London-Sydney, 1974.

\bibitem{LV} T.\ L\^e and J.\ Vaaler: \textit{Sums of products of fractional parts.} Proc.\ Lond.\ Math.\ Soc.\ 111 (2015), 561--590.

\bibitem{SCH} L.\ Ro\c cadas and J.\ Schoi\ss engeier: \textit{On the local discrepancy of $(n\alpha)$-sequences.} J.\ Number Theory 131 (2011), 1492--1497.

\bibitem{SK} M.\ Skriganov: \textit{On integer points in polygons.} Ann.\ Inst.\ Fourier (Grenoble) 43 (1993), 313--323.

\bibitem{SO} V.\ T.\ S\'os: \textit{On the discrepancy of the sequence $\{ n \alpha \}$.} Topics in number theory (Proc.\ Colloq., Debrecen, 1974), pp.\ 359--367. Colloq.\ Math.\ Soc.\ J\'anos Bolyai, Vol.\ 13, North-Holland, Amsterdam, 1976.

\bibitem{WE} M.\ Weber: \textit{Discrepancy of randomly sampled sequences of reals.} Math.\ Nachr.\ 271 (2004), 105--110.

}
\end{thebibliography}
\end{document}